\documentclass[11pt]{amsart}
\newcommand\A{\mathfrak A}
\newcommand\supp{\operatorname{supp}}
\newcommand\mor{\operatorname{mor}}
\newtheorem*{theorem*}{Theorem}
\newtheorem{lemma}{Lemma}
\begin{document}
\title{Non-isomorphism of categories of algebras}
\author{Walter D. Neumann}
\begin{abstract}This paper was accepted for Comment.\ Math.\ Univ.\
  Carolinae in 1968 but then got lost during the military occupation
  of  Prague and surrounding events. A carbon copy of it turned up
  in my Columbia office.  The only changes to the text are
  addition of a footnote and some commas.\end{abstract}\maketitle

We recall that
if $\Delta = (K_i)_{i\in I}$ is a collection of sets, then an algebra
$(A,(f_i)_{\{i\in I\}})$ of type $\Delta$ consists of a set $A$ and a
collection $(f_i)_{i\in I}$ of operations on $A$, such that for each
$i\in I$ $f_i$ is a $K_i$--ary operation; that is, $f_i\colon
A^{K_i}\to A$. We shall use the same symbol for an algebra $A$ and its
underlying set.

We say that the types $\Delta=(K_i)_{i\in I}$ and $\Delta'=(L_j)_{j\in
  J}$ are \emph{equivalent} if there is a bijection $\phi\colon I\to
  J$ such that for each $i$ the cardinalities $|K_i|$ and
  $|L_{\phi(i)}|$ are equal.

We denote by $\A(\Delta)$ the category of all algebras of type
$\Delta$ and all homomorphisms between them. Clearly, if $\Delta$ and
$\Delta'$ are equivalent types, then $\A(\Delta)$ and $\A(\Delta')$
are isomorphic categories. The purpose of this note is to prove the
converse, thereby answering a question posed by A. Pultr.

This question was motivated by the result of Z. Hedrl\'in and A. Pultr
\cite{2} which states that even if $\Delta$ and $\Delta'$ are not
equivalent, $\A(\Delta)$ and $\A(\Delta')$ are embeddable as full
subcategories in each other, so long as $\Delta$ and $\Delta'$ are not
too small (the sums of the cardinalities of the sets in $\Delta$ and
$\Delta'$ should each exceed $1$).

In \S 1 we recall the necessary properties of algebraic operations,
and in \S 2 we prove the theorem.

\section{Operations}

Let $f\colon F^K\to K$ be a $K$--ary operation on $F$ ($F, K$ any
sets). The \emph{support} of $f$ (Felscher \cite{1}) is defined by
$$\supp(f)=\{A\subseteq K~|~ \text{for all }\alpha,\beta\in F^K,
\alpha|A=\beta|A \Rightarrow f(\alpha)=f(\beta)\}$$
That is, $A\in \supp(f)$ means that the value of $f$ on any $\alpha\in
F^K$ is already determined by the restriction of $\alpha$ to $A$.

The \emph{essential rank} of $f$ is defined as $\min\{|A|~|~A\in
\supp(f)\}$. If $\A$ is a primitive class (variety) of algebras,
define its \emph{rank} to be the supremum of the essential ranks of
all $\A$--algebraic operations. Since the essential rank of an
algebraic operation is always less than the dimension (S\l omi\'nski
\cite{3}) of $\A$, this supremum exists.

Now let $f$ be any element of the free algebra $F(X,\A)$ with basis
$X$ of the primitive class $\A$, and let $A$ be any algebra in
$\A$. One can define an $X$--ary operation $\hat f^A$ on $A$ by $\hat
f^A(\alpha)=\bar\alpha(f)$ for any $\alpha\in A^X$. Here $\bar\alpha$
is the homomorphic extension of $\alpha$ to a homomorphism of
$F(X,\A)$ to $A$. $\hat f^A$ is an algebraic operation on $A$; in fact
$f\mapsto \hat f^A$ is a surjective homomorphism of $F(X,\A)$ onto the
algebra $H^X(A)$ of all $X$--ary algebraic operations on $A$ (see for
instance \cite{1}).

\section{Non-isomorphism of categories}
\begin{theorem*}
  If the categories $\A(\Delta)$ and $\A(\Delta')$ are isomorphic, then
  $\Delta$ and $\Delta'$ are equivalent types.
\end{theorem*}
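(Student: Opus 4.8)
The plan is to recover, purely from the category, the family of cardinalities $(|K_i|)_{i\in I}$ up to reindexing bijection; equivalence of $\Delta$ and $\Delta'$ is exactly the statement that the two arity multisets coincide, i.e.\ that for every cardinal $\kappa$ the number $N_\kappa=|\{i\in I : |K_i|=\kappa\}|$ of basic operations of arity $\kappa$ is the same for $\Delta$ and $\Delta'$. Fix an isomorphism $\Phi\colon\A(\Delta)\to\A(\Delta')$. Since $\Phi$ and its inverse preserve every categorical construction, the whole argument reduces to exhibiting $N_\kappa$ as a categorical invariant. The anchor is the free algebra on one generator $F(1)$: I would first characterize it up to isomorphism by a purely categorical property, so that $\Phi$ must carry $F_\Delta(1)$ to $F_{\Delta'}(1)$. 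Every other free algebra is then $F(X)\cong\coprod_X F(1)$, the generating cardinal $|X|$ is recovered as the number of $F(1)$-summands, and the forgetful functor is $U\cong\hom(F(1),-)$; consequently $\Phi$ commutes with the underlying-set functors up to natural isomorphism and preserves all set-level data used below.

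Next I would make the support and essential rank of \S1 categorical. For $B\subseteq X$ the inclusion induces a split monomorphism $\iota_B\colon F(B)\to F(X)$ (a coproduct-summand inclusion), and an element $f\in F(X)$, viewed as a point $\tilde f\colon F(1)\to F(X)$, satisfies $B\in\supp(f)$ if and only if $\tilde f$ factors through $\iota_B$: the retraction $r$ onto $F(B)$ turns the condition ``$\alpha|_B=\beta|_B\Rightarrow f(\alpha)=f(\beta)$'' into the factorization $\tilde f=\iota_B\circ(r\,\tilde f)$. Hence the essential rank of $f$ equals $\min\{|B| : \tilde f\text{ factors through }\iota_B\}$, a categorical quantity. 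I would likewise identify the free generators of $F(X)$ as exactly the coproduct injections $F(1)\to F(X)$, so that the permutation action of $\mathrm{Sym}(X)$ by basis renamings, together with the summand inclusions $\iota_B$, are all categorically available.

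The heart of the bookkeeping is to isolate the basic operations among all points of free algebras $u\colon F(1)\to F(X)$. Order these points by the substitution preorder, declaring $u\le v$ when $v$ factors through $u$ along some algebra map of the ambient free algebras; this is precisely ``$v$ is a substitution instance of $u$''. A point is a free generator exactly when it is a coproduct injection, and I claim the substitution-minimal non-generator points are, up to basis-renaming isomorphism, in bijection with the basic operations: any non-generator term is a substitution instance of its top operation applied to distinct fresh variables, and such a term $f_i$ applied to distinct generators is itself minimal and has essential rank $|K_i|$. Distinct operations, even of equal arity, give non-isomorphic minimal points because a basis renaming preserves the top operation symbol, while repeated-variable and diagonal terms are non-minimal and drop out. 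Therefore $N_\kappa$ is the number of isomorphism classes of substitution-minimal non-generator points of essential rank $\kappa$ (with $\kappa=0$ recovering the constants inside the initial object $F(\emptyset)$), a categorical invariant preserved by $\Phi$. Matching these across $\Phi$ gives $N^\Delta_\kappa=N^{\Delta'}_\kappa$ for all $\kappa$, hence the desired reindexing bijection.

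The main obstacle is the very first step: giving a purely categorical characterization of $F(1)$ valid in every $\A(\Delta)$, since it is the sole bridge from the bare category to the set-level combinatorics on which support and essential rank rest. I would characterize $F(1)$ as a coproduct-indecomposable regular-projective regular generator that is not terminal, exploiting that its only non-identity idempotent endomorphisms are the constant maps onto one-element subalgebras, so that its only nontrivial retract is a terminal object; uniqueness up to isomorphism must then be verified. Genuine care is needed for the degenerate types --- the empty type, where $\A(\Delta)$ is $\mathbf{Set}$ and $F(1)$ coincides with the terminal object; types with only constants; and the empty-algebra and initial-object edge cases --- which must be treated separately so that the characterization, and hence the transport of $F(1)$ along $\Phi$, does not fail.
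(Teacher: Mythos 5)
Your plan has essentially the same skeleton as the paper's proof: first pin down the rank-one free algebra $P=F(1)$ by a purely categorical property and use $\mor(P,-)$ to transport the underlying-set functor across the isomorphism (this is exactly the paper's Lemma 2), then recover each arity as the essential rank of a well-chosen non-generator element of a free algebra (the paper's Lemma 1, which does the bookkeeping inside one free algebra of large rank, via the equivalence relation generated by ``$y_1$ maps to $y_2$ under an endomorphism,'' rather than via your substitution preorder). Your preorder version of the second step is correct in substance: unique readability in absolutely free algebras guarantees that homomorphisms preserve the outermost operation symbol, that $f_i$ applied to distinct generators is substitution-minimal, and that $\supp(\hat f)$ consists of the supersets of the variable set of $f$, detected by factorization through summand inclusions. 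One repair is needed there: the classes must be counted modulo mutual substitution instance (the equivalence of your preorder), not modulo basis-renaming isomorphism; two minimal instances of the same $f_i$ sitting in free algebras of different ranks are never related by a basis renaming, and even inside a fixed $F(X)$ they need not be when $|K_i|$ is infinite (the complements of the images of the two injections $\alpha_0$ can have different cardinalities), so counting basis-renaming classes overcounts.

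The genuine gap is at the step you yourself call the main obstacle. The supporting claim --- that the only non-identity idempotent endomorphisms of $F(1)$ are constant maps onto one-element subalgebras, so that the only nontrivial retract of $F(1)$ is a terminal object --- is false as soon as $\Delta$ contains a constant: for every closed term $t$, the endomorphism $x\mapsto t$ of $F(1)$ is idempotent, is not a constant map, and its image is the whole algebra $F(\emptyset)$ of closed terms, which is the \emph{initial} object and is typically infinite. (Type with one constant $c$ and one unary $f$: the retract is $\{c,fc,ffc,\dots\}$; one-element subalgebras do not exist at all here, since every subalgebra contains all closed terms.) So your intended route to indecomposability and uniqueness collapses as stated. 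It is repairable, but the repair needs exactly what you never invoke: that in $\A(\Delta)$ subalgebras, hence retracts, of free algebras are free (the Schreier property of the class of all algebras of a type), which is what converts ``regular projective'' into ``free,'' together with uniqueness of the free basis of an absolutely free algebra, which rules out $F(X)$ for $|X|\ge 2$ and shows that the one nontrivial retract of $F(1)$ is initial, not terminal. You should also drop the clause ``not terminal'': it is precisely what makes your characterization vacuous for the empty type, where $\A(\Delta)=\mathbf{Set}$ and $F(1)$ \emph{is} terminal, whereas without that clause the characterization handles $\mathbf{Set}$ correctly. For comparison, the paper's test --- $P$ is free iff every epimorphism onto $P$ splits, and of rank one iff moreover every endomorphism of $P$ is mono --- rests on the same Schreier fact (plus surjectivity of epimorphisms in $\A(\Delta)$) but needs no idempotent analysis; note that it, too, must separately exclude $P\cong F(\emptyset)$, whose unique endomorphism is vacuously mono.
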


We shall prove this theorem in two steps. Let $s$ denote the canonical
underlying set functors on $\A(\Delta)$ and $\A(\Delta')$.

\begin{lemma}
  If the concrete categories $(\A(\Delta),s)$ and $(\A(\Delta'),s)$
  are concretely isomorphic (that is, isomorphic by a functor which
  preserves underlying sets), then $\Delta$ is equivalent to $\Delta'$.
\end{lemma}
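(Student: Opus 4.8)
The plan is to reduce the statement to a purely clone-theoretic counting problem. First I would observe that the free algebra $F(X)=F(X,\A(\Delta))$ represents the functor $A\mapsto s(A)^X$, since $s(A)^X=\operatorname{Hom}_{\mathrm{Set}}(X,s(A))\cong\operatorname{Hom}_{\A(\Delta)}(F(X),A)$. A concrete isomorphism $\Phi\colon\A(\Delta)\to\A(\Delta')$ with $s\circ\Phi=s$ therefore carries $F(X)$ to a representing object for the \emph{same} set functor $s^X$ on $\A(\Delta')$, namely $F(X,\A(\Delta'))$, compatibly with the universal families $X\hookrightarrow s(F(X))$. By Yoneda the $X$-ary term operations of either type are exactly the elements of the corresponding free algebra, i.e.\ the natural transformations $s^X\to s$; transporting such a natural family $\omega$ along $\Phi$ by $\omega^\Phi_B:=\omega_{\Phi^{-1}(B)}$ and using $s\circ\Phi^{-1}=s$ shows that $\Phi$ induces a bijection between the $X$-ary term operations of the two types which, on every algebra, is the identity on the underlying concrete function $s(A)^X\to s(A)$. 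Consequently the full clones of term operations coincide as families of concrete operations, with the same composition, the same projections $\pi_x$ (the operations corresponding to the generators $x\in X$), and the same relabelling action $\omega\mapsto\omega\circ(\pi_{\sigma(x)})_{x\in X}$ of $\operatorname{Sym}(X)$. Because $\supp(f)$ and the essential rank depend only on $f$ as a set map, these invariants are preserved as well.

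Next I would translate type-equivalence into a count. Two types are equivalent precisely when, for every cardinal $\kappa$, the multiplicities agree, i.e.\ $|\{i\in I:|K_i|=\kappa\}|=|\{j\in J:|L_j|=\kappa\}|$, so it suffices to recover each such multiplicity from the clone alone. Fixing a set $X$ with $|X|=\kappa$, I would single out inside $F(X)$ the \emph{basic} elements: those $t$ that are not projections, have full support (so the essential rank equals $\kappa$, meaning all $\kappa$ variables are essential), and are \emph{indecomposable}, in the sense that in every factorization $t=u\circ(v_x)_{x}$ through a non-projection $u$ the essential arguments of $u$ receive pairwise distinct projections. Unique readability of terms in the absolutely free algebra $F(X)$ shows that these are exactly the depth-one terms $f_i\bigl((x_{\beta^{-1}(a)})_{a\in K_i}\bigr)$ in which the variables are distinct and exhaust $X$; equivalently $|K_i|=\kappa$ and $\beta\colon K_i\to X$ is a bijection. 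The full-support condition rules out collapsed arguments such as $f_i(x,x)$, and indecomposability rules out genuinely nested terms such as $f_i(x_1,f_j(x_2,x_3))$ while leaving the $f_i(x,\dots)$ themselves.

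Finally I would count $\operatorname{Sym}(X)$-orbits of basic elements. A single symbol $f_i$ of arity $\kappa$ contributes exactly the set $\{f_i((x_{\beta^{-1}(a)})_a):\beta\colon K_i\to X\text{ a bijection}\}$, on which $\operatorname{Sym}(X)$ acts freely and transitively (a free symbol has no nontrivial symmetries), so this is one orbit; and by unique readability basic elements built from different symbols lie in different orbits. Hence the number of $\operatorname{Sym}(X)$-orbits of basic elements of $F(X)$ equals $|\{i:|K_i|=\kappa\}|$. Since the clone, the projections, the $\operatorname{Sym}(X)$-action, and the support and essential-rank data are all preserved by the concrete isomorphism, this orbit count is an invariant, so $|\{i:|K_i|=\kappa\}|=|\{j:|L_j|=\kappa\}|$ for every $\kappa$. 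Choosing, for each $\kappa$, any bijection between the symbols of arity $\kappa$ in $\Delta$ and those in $\Delta'$ and assembling them yields the required bijection $\phi\colon I\to J$ with $|K_i|=|L_{\phi(i)}|$, i.e.\ $\Delta$ and $\Delta'$ are equivalent.

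The main obstacle is the middle step: giving an intrinsic, clone-only description of the basic symbols that is provably preserved, i.e.\ distinguishing genuine generators from their composites using nothing but composition, projections, and essential rank. The delicate cases are the low arities---nullary symbols (constants, of essential rank $0$ and fixed by $\operatorname{Sym}(X)$) and unary symbols, where $f_i(x)$ must be separated from iterates $f_j(f_i(x))$ purely by indecomposability---together with the need to exclude diagonal substitutions like $f_i(x,x)$ via the full-support requirement. Verifying that the displayed characterization is correct for all arities, finite and infinite, is where the unique-readability structure of the free algebra, packaged through $\supp$ and essential rank, does the real work.
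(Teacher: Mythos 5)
Your proposal is correct, but it takes a genuinely different route from the paper's. The two arguments share their foundation: both first observe that a concrete isomorphism preserves free algebras and hence all term operations as concrete functions (the paper gets this from the remark that freeness is defined purely in terms of underlying sets and homomorphisms, plus \S 1; you get it via representability of $s^X$ and Yoneda), and both ultimately rest on unique readability in the absolutely free algebra together with $\supp$ and essential rank. After that the strategies diverge. The paper fixes a \emph{single} free algebra $F$ on a basis $X$ with $|X|$ at least the rank of $\A(\Delta)$, forms $Y=F-X$, and takes the equivalence relation on $Y$ generated by ``$y_2$ is an endomorphic image of $y_1$''; since endomorphisms preserve the outermost operation symbol, and every $f_i(\alpha)$ is an endomorphic image of $f_i(\alpha_0)$ with $\alpha_0$ injective into $X$, the equivalence classes are exactly the sets $\{f_i(\alpha)\mid\alpha\in F^{K_i}\}$, so the index set $I$ is recovered in one stroke; the arity $|K_i|$ is then read off as the essential rank of a class element that maps onto its whole class. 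You instead work one cardinal $\kappa$ at a time inside the preserved clone, isolating the depth-one terms with injective and surjective variable labelling (non-projection, every variable essential, indecomposable) and counting $\operatorname{Sym}(X)$-orbits of these, one orbit per symbol of arity $\kappa$. The paper's endomorphism-orbit device buys the luxury of never having to isolate depth-one terms: nested terms like $f_j(f_i(x))$ and collapsed terms like $f_i(x,x)$ harmlessly fall into the same class as the clean generators, because classes rather than individual elements index the symbols. Your route requires the indecomposability characterization, which you rightly flag as the crux; it does hold, by exactly the unique-readability argument you indicate, provided factorizations $t=u\circ(v_z)_z$ range over $u$ of \emph{all} arities (restricting $u$ to be $X$-ary would let $f_i(x,x)$ slip through when $\kappa=1$). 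In exchange your invariant is completely explicit --- the multiplicity of each arity --- and uses only basis relabelings rather than arbitrary endomorphisms. Two small cautions on edge cases: for infinite $\kappa$, ``full support'' must be read as ``every variable essential'' (essential rank $=\kappa$ is strictly weaker and would admit terms $f_i\bigl((x_{\beta(a)})_a\bigr)$ with $\beta$ injective but not surjective, wrecking the orbit count); and for $\kappa=1$ it is indecomposability, not full support, that excludes $f_i(x,x)$. Both conditions appear in your definition, so the characterization and the count stand as written.
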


\begin{proof}
  Let $\Delta=(K_i)_{i\in I}$. It suffices to show that the knowledge
  of the category $\A(\Delta)$ and its underlying set functor is
  sufficient to recover $\Delta$ up to equivalence. Now the definition
  of free algebra of $\A(\Delta)$ over a basis involves only the
  underlying sets and homomorphisms, so we know the free algebras of
  $\A(\Delta)$. Hence by the last paragraph of \S1 we know the
  algebraic operations for $\A(\Delta)$ so we can calculate the rank
  $\delta$ of $\A(\Delta)$.

Let $F$ be a free algebra of $\A(\Delta)$ with basis $X$ such that
$|X|\ge \delta$. Let $Y=F-X$. Define a relation $R$ on $Y$ by
$y_1Ry_2$ if and only if there is an endomorphism of $F$ which maps
$y_1$ onto $y_2$. Let $S$ be the smallest equivalence relation on $Y$
containing $R$, and let $J=Y/S$ be the set of equivalence classes of
$Y$ under $S$.

We now consider the meaning of this construction in terms of the
actual algebraic structure of $F$. The set $Y$ is the set of all
$f_i(\alpha)$, $i\in I, \alpha\in F^{K_i}$, where the $f_i$ are the
defining operations of the class. Under an endomorphism of $F$ an
element $f_i(\alpha)$ cannot be mapped onto an element $f_j(\beta)$
with $i\ne j$. Further, if $\alpha_0\in F^{K_i}$ is injective with
$\alpha_0(K_i)\subseteq X$, then every element of the form
$f_i(\alpha)$, $\alpha\in F^{K_i}$, is the image of $f_i(\alpha_0)$
under  suitable endomorphisms of $F$. Hence the equivalence classes
under $S$ are just the subsets of $Y$ of the form
$\phi(i)=\{f_i(\alpha)~|~\alpha\in F^{K_i}\}$, and $\phi\colon
i\mapsto \phi(i)$ is a bijective map from $I$ to $J$.

It remains only to show that to each $\phi(i)\in J$ we can recover
$|K_i|$. We can choose a $y\in \phi(i)$ with the property that every
element of $\phi(i)$ is the image of $y$ under some endomorphism of
$F$. We then have the corresponding $X$--ary algebraic operation $\hat
y=\hat y^F$ in $H^X(F)$. We claim that $\hat y$ has essential rank
$|K_i|$, completing the proof. Indeed, $y$ is of the form
$f_i(\alpha_0)$ for some injective $\alpha_0\in F^{K_i}$ with
$\alpha_0(K_i)\subseteq X$. $\supp(\hat y)$ is just the set of all
subsets of $X$ which contain $\alpha_0(K_i)$, so the essential rank of
$\hat y$ is $|\alpha_0(K_i)|$. Since $\alpha_0$ is injective,
$|\alpha_0(K_i)|=|K_i|$.
\end{proof}

The second part of the proof of the theorem is given by the following
lemma:

\begin{lemma}\label{2}
  If the categories $\A(\Delta)$ and $\A(\Delta')$ are isomorphic,
  then the concrete categories $(\A(\Delta),s)$ and $(\A(\Delta'),s)$
  are concretely isomorphic.
\end{lemma}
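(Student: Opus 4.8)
The plan is to realise the underlying set functor as a representable functor and then to pin down its representing object in purely categorical terms. By the last paragraph of \S1, for any algebra $A$ an element of $A$ is the same thing as a homomorphism from the free algebra on one generator into $A$; thus, writing $F_1$ for the free algebra of $\A(\Delta)$ on one generator, we have $s\cong\operatorname{Hom}(F_1,-)$, and likewise the underlying set functor of $\A(\Delta')$ is $\operatorname{Hom}(F_1',-)$ for the corresponding free algebra $F_1'$. Let $T\colon\A(\Delta)\to\A(\Delta')$ be the given isomorphism of categories. First I would reduce the lemma to the single assertion that $T(F_1)\cong F_1'$. Indeed, if this holds then for every $A$ there are natural bijections $s(TA)=\operatorname{Hom}(F_1',TA)\cong\operatorname{Hom}(T(F_1),TA)\cong\operatorname{Hom}(F_1,A)=s(A)$, the first from $F_1'\cong T(F_1)$ and the second from $T$ being full and faithful; hence $s\circ T\cong s$ as set--valued functors.

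Next I would characterise $F_1$ by properties invariant under isomorphism of categories. Since $s$ is faithful, reflects isomorphisms, and carries surjective homomorphisms to surjections, $F_1$ is a regular projective generator, i.e.\ a regular generator whose hom--functor preserves regular epimorphisms. Moreover $F_1$ is minimal among these: if $Q$ is any regular projective generator, then because $Q$ is a generator the family of all homomorphisms $Q\to F_1$ is jointly surjective, so some such map hits the free generator of $F_1$ and is therefore onto, and projectivity of $F_1$ splits it, exhibiting $F_1$ as a retract of $Q$. Finally $F_1$ is coproduct--indecomposable, being free on a one--element (hence connected) basis. All of these are categorical properties, so $T(F_1)$ enjoys them in $\A(\Delta')$; since $F_1'$ enjoys them too, $T(F_1)$ and $F_1'$ are each a retract of the other.

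The main obstacle is exactly here: to pass from ``mutually retracting coproduct--indecomposable regular projective generators'' to an honest isomorphism $T(F_1)\cong F_1'$. Mutual retraction does not force isomorphism in a general category, so I expect to need a rigidity argument special to free algebras — for instance, showing that the surjective endomorphism of $F_1$ built from the two splittings is necessarily bijective, or strengthening the minimality clause to a genuine universal property determining $F_1$ up to unique isomorphism. This is the point at which the combinatorics of \S1 (one--generated freeness and the behaviour of essential rank) should re--enter to exclude the pathological retracts.

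Granting $T(F_1)\cong F_1'$, fix a natural isomorphism $\eta\colon s\circ T\Rightarrow s$ with components the bijections $\eta_A\colon s(TA)\to s(A)$ produced above. I would finish by rectifying $T$ to an on--the--nose concrete isomorphism. Let $U(A)$ be the $\Delta'$--algebra with underlying set $s(A)$ whose operations are those of $TA$ transported across $\eta_A$, so that $\eta_A$ becomes an isomorphism $TA\to U(A)$, and set $U(f)=\eta_B\circ Tf\circ\eta_A^{-1}$ for $f\colon A\to B$. Then $U$ is isomorphic to $T$ via $\eta$, hence is again an isomorphism of categories, while naturality of $\eta$ shows that the underlying map of $U(f)$ is precisely $s(f)$. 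Thus $s\circ U=s$, so $(\A(\Delta),s)$ and $(\A(\Delta'),s)$ are concretely isomorphic, as required.
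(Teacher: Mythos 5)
Your overall strategy is the same as the paper's: represent the underlying set functor as $\mor(P,-)$ for a free algebra $P$ of rank $1$, transport this along the isomorphism $T$, and then rectify to an on-the-nose concrete isomorphism. The reduction to showing $T(F_1)\cong F_1'$ and the final transport-of-structure step are fine in outline. But the obstacle you flag yourself is a genuine gap, and it sits exactly where the lemma has its content. Your invariants (regular projective generator, minimality under retraction, coproduct-indecomposability) deliver only that $T(F_1)$ and $F_1'$ are retracts of one another, and, as you say, mutual retraction does not imply isomorphism in a general category; no rigidity argument is supplied, so the proof is incomplete at its decisive step.

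The paper avoids this difficulty by characterizing $P$ outright, in purely categorical terms: $P$ is free if and only if every epimorphism onto $P$ has a section, and it has rank $1$ if and only if every morphism $P\to P$ is mono. Since these properties are stated without reference to underlying sets, they are preserved by any isomorphism of categories, so $T(P)$ \emph{is} a free algebra of rank $1$ in $\A(\Delta')$ and no comparison of mutual retracts is ever needed. The substantive fact behind the first equivalence --- and precisely the rigidity you were missing --- is that in the variety of \emph{all} algebras of a given type, subalgebras (in particular retracts) of free algebras are again free, so split projectivity already forces freeness. The same fact would complete your own argument: $T(F_1)$, being a retract of $F_1'$, is free; being a generator it has rank at least $1$; and being coproduct-indecomposable it cannot have rank $\ge 2$, since a free algebra of rank $\ge 2$ splits as a free product; hence $T(F_1)\cong F_1'$. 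Without some such structural input about retracts of free $\Delta'$-algebras, the list of categorical properties you give cannot pin down $F_1'$. (A lesser remark: your rectified functor $U$ is a priori only a concrete \emph{equivalence}, since bijectivity on objects is not automatic; the paper is equally brisk at the corresponding point, and transportability of algebraic structure along bijections is what makes this step harmless.)
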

\begin{proof}
  We first determine a free algebra $P$ of rank $1$ (that is, basis
  cardinality 1) in $\A(\Delta)$. This can be done in many ways, For
  instance, $P\in \A(\Delta)$ is a free algebra if and only if every
  epimorphism to $P$ has a section, and it furthermore has rank $1$ if
  and only if every morphism $P\to P$ is mono.

The functor $\mor(P,-)\colon \A(\Delta)\to \bold{Set}$ is a ``new
underlying set functor'' which is naturally equivalent to the standard
underlying set functor $s$ on $\A(\Delta)$.

Let $T\colon \A(\Delta)\to \A(\Delta')$ be an isomorphism. Then
$T(P)$ is a free algebra of rank $1$ in $\A(\Delta')$, so we also have
a ``new underlying set functor'' $\mor(T(P),-)$ on $\A(\Delta')$ which
is naturally equivalent to the standard one. 

If one identifies the sets $\mor(P,A)$ and $\mor(T(P),T(A))$ by means
of $T$ for each $A\in \A(\Delta)$, then $T$ commutes with these ``new
underlying set functors'', and it is not difficult, using the
properties of the standard underlying set functors, to deduce that
$\A(\Delta)$ and $\A(\Delta')$ are also concretely isomorphic with
respect to the standard underlying set functors.
\end{proof}

Lemma \ref{2} states that two categories of the form $\A(\Delta)$ are
abstractly isomorphic if and only if they are concretely isomorphic,
or in more algebraic terminology: rationally equivalent. This in fact
holds for more general primitive classes of algebras; for instance a
slight modification of the above proof shows that two Schreier
primitive classes (subalgebras of free algebras are free) whose free
algebras of rank 1 are not isomorphic to any of higher
rank\footnote{The phrase ``whose free algebras \dots higher rank'' was a
  handwritten addition in the 1968 typescript -- apparently an
  afterthought.} are abstractly isomorphic as
categories if and only if they are rationally equivalent. Some
restriction on the classes considered is however necessary, for it is
known that to any primitive class one can find primitive classes not
rationally equivalent to the given one, such that the categories are
isomorphic.

\end{document}